\DeclareFontFamily{U}{wncy}{}
\DeclareFontShape{U}{wncy}{m}{n}{<->wncyr10}{}
\DeclareSymbolFont{mcy}{U}{wncy}{m}{n}
\DeclareMathSymbol{\Sh}{\mathord}{mcy}{"58}
\begin{document}

\newcommand{\ci}[1]{_{ {}_{\scriptstyle #1}}}

\newcommand{\unit}{1\!\!1}

\newcommand*\rfrac[2]{{}^{#1}\!/_{#2}}

\newcommand{\norm}[1]{\ensuremath{\|#1\|}}
\newcommand{\abs}[1]{\ensuremath{\vert#1\vert}}
\newcommand{\p}{\ensuremath{\partial}}
\newcommand{\pr}{\mathcal{P}}

\newcommand{\pbar}{\ensuremath{\bar{\partial}}}
\newcommand{\db}{\overline\partial}
\newcommand{\D}{\mathbb{D}}
\newcommand{\B}{\mathbb{B}}
\newcommand{\Sp}{\mathbb{S}}
\newcommand{\T}{\mathbb{T}}
\newcommand{\R}{\mathbb{R}}
\newcommand{\Z}{\mathbb{Z}}
\newcommand{\C}{\mathbb{C}}
\newcommand{\N}{\mathbb{N}}
\newcommand{\scrH}{\mathcal{H}}
\newcommand{\scrL}{\mathcal{L}}
\newcommand{\td}{\widetilde\Delta}

\newcommand{\La}{\langle }
\newcommand{\Ra}{\rangle }
\newcommand{\rk}{\operatorname{rk}}
\newcommand{\card}{\operatorname{card}}
\newcommand{\ran}{\operatorname{Ran}}
\newcommand{\osc}{\operatorname{OSC}}
\newcommand{\im}{\operatorname{Im}}
\newcommand{\re}{\operatorname{Re}}
\newcommand{\tr}{\operatorname{tr}}
\newcommand{\vf}{\varphi}
\newcommand{\f}[2]{\ensuremath{\frac{#1}{#2}}}


\newcommand{\entrylabel}[1]{\mbox{#1}\hfill}

\newenvironment{entry}
{\begin{list}{X}%
  {\renewcommand{\makelabel}{\entrylabel}%
      \setlength{\labelwidth}{55pt}%
      \setlength{\leftmargin}{\labelwidth}
      \addtolength{\leftmargin}{\labelsep}%
   }%
}%
{\end{list}}


\numberwithin{equation}{section}

\newtheorem{thm}{Theorem}[section]
\newtheorem{lm}[thm]{Lemma}
\newtheorem{cor}[thm]{Corollary}
\newtheorem{conj}[thm]{Conjecture}
\newtheorem{prob}[thm]{Problem}
\newtheorem{prop}[thm]{Proposition}
\newtheorem*{prop*}{Proposition}

\theoremstyle{remark}
\newtheorem{rem}[thm]{Remark}
\newtheorem*{rem*}{Remark}

\title{Bloom's Inequality: Commutators in a Two-Weight Setting}

\author{Irina Holmes}
\address{Irina Holmes, School of Mathematics\\ Georgia Institute of Technology\\ 686 Cherry Street\\ Atlanta, GA USA 30332-0160}
\email{irina.holmes@math.gatech.edu}

\author[M. T. Lacey]{Michael T. Lacey$^{\dagger}$}
\address{Michael T. Lacey, School of Mathematics\\ Georgia Institute of Technology\\ 686 Cherry Street\\ Atlanta, GA USA 30332-0160}
\email{lacey@math.gatech.edu}
\thanks{$\dagger$  Research supported in part by a National Science Foundation DMS grant \#1265570.}

\author[B. D. Wick]{Brett D. Wick$^{\ddagger}$}
\address{Brett D. Wick, Department of Mathematics\\ Washington University -- Saint Louis\\ One Brookings Drive\\Saint Louis, MO USA 63130-4899}
\email{wick@math.wustl.edu}
\thanks{$\ddagger$  Research supported in part by a National Science Foundation DMS grant \#0955432 and \#1560955.}

\subjclass[2000]{Primary }
\keywords{Commutators, Calder\'on-Zygmund Operators, BMO, weights, Haar multiplier, paraproducts}

\begin{abstract} 
In 1985, Bloom  characterized the boundedness of the commutator  $[b,H]$ as a map 
between a pair of weighted $ L ^{p}$ spaces, where both weights are in $ A_p$.  The characterization is 
in terms of a novel $ BMO$ condition.   
We give a `modern' proof of this result, in the case of $ p=2$.  In a subsequent paper, this argument will 
be used to generalize Bloom's result to  all Calder\'on-Zygmund operators and dimensions. 
\end{abstract}

\maketitle
\setcounter{tocdepth}{1}
\tableofcontents

\section{Introduction and Statement of Main Results}

Let $\mu$ be a weight on $\mathbb{R}$, i.e.~a function that is positive almost everywhere and is locally integrable.  Then define $L^2(\mathbb{R};\mu)\equiv L^2(\mu)$ to be the space of functions which are square integrable with respect to the measure $\mu(x) dx$, namely
$$
\left\Vert f\right\Vert_{L^2(\mu)}^2\equiv\int_{\mathbb{R}} \left\vert f(x)\right\vert^2 \mu(x)dx.
$$
For an interval $I$, let $\left\langle \mu\right\rangle_{I}\equiv \frac{1}{\vert I\vert}\int_{I} \mu(x)dx$.  And, similarly, set $\mathbb{E}_I^{\mu}(g)\equiv \frac{1}{\mu(I)}\int_{I} g\mu dx$.

In \cite{Bloom} Bloom considers the behavior of the commutator
\begin{equation*}
[b, H] : L ^{p} (\lambda)  \mapsto L ^{p} (\mu)
\end{equation*}
where $H$ is the Hilbert transform.  When the weights $\mu=\lambda\in A_2$ then it is well-known that  boundedness  is characterized by   $b\in BMO$.  Bloom  however works in the setting of $ \mu \neq \lambda \in A_2$, finding a characterization in terms of a $BMO$ space adapted to the weight $\rho=\left(\frac{\mu}{\lambda}\right)^{\frac{1}{p}}$, 
namely 
$$  \label{e:bmo-rho}
\left\Vert b\right\Vert_{BMO_{\rho}}\equiv\sup_{I} \left(\frac{1}{\rho(I)}\int_{I} \left\vert b(x)-\left\langle b\right\rangle_I\right\vert^2 dx\right)^{\frac{1}{2}}.
$$
Recall that $ \lambda \in A_p$ if and only if the supremum over intervals below is finite. 
\begin{equation*}
[ \lambda ] _{A_p} = \sup _{I} \langle \lambda  \rangle_I \langle  \lambda ^{1-p'} \rangle ^{p-1} < \infty . 
\end{equation*}

\begin{thm}[Bloom, \cite{Bloom}*{Theorem 4.2}]
\label{t:bloom} 
Let $ 1< p < \infty $, $ \mu ,\lambda \in A_p$. Set $ \rho = \left(\frac{\mu}{\lambda}\right) ^{\frac{1}{p}}$. Then, 
$$
\left\Vert [b,H]:L^p(\mu)\to L^p(\lambda)\right\Vert\approx \left\Vert b\right\Vert_{BMO_{\rho}}.
$$ 
\end{thm}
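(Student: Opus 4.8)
Throughout I restrict to $p=2$, as the paper does, so that $\rho=(\mu/\lambda)^{1/2}$, i.e.\ $\rho^2\lambda=\mu$. The plan is to prove the two inequalities separately; the upper bound $\norm{[b,H]\colon L^2(\mu)\to L^2(\lambda)}\lesssim\norm{b}_{BMO_\rho}$ carries the real content, and I would obtain it by passing to the dyadic model. By Petermichl's theorem --- or, with the intended Calder\'on--Zygmund generalization in view, the Hyt\"onen dyadic representation theorem --- $H$ equals, up to an absolute constant, an average over random dyadic grids $\mathcal D$ of Haar shifts $S=S_{\mathcal D}$ (for $H$ itself one shift per grid suffices), so $[b,H]$ is the same average of $[b,S_{\mathcal D}]$. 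The grid-averaging, and in the general case the geometrically decaying weights on the shift complexities, are harmless, so it suffices to prove $\norm{[b,S]\colon L^2(\mu)\to L^2(\lambda)}\lesssim\norm{b}_{BMO_\rho}$ for a fixed grid and Haar shift $S$, with implied constant depending only on $[\mu]_{A_2}$, $[\lambda]_{A_2}$ and polynomially on the complexity of $S$ (which is summable against the representation coefficients).

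Fix the grid and let $h_I$ be the $L^2$-normalized Haar functions. I would use the paraproduct decomposition of pointwise multiplication, $bf=\Pi_b f+\Pi_b^{*}f+\mathrm M_b f$, where $\Pi_b f=\sum_I\langle b,h_I\rangle\langle f\rangle_I h_I$, $\Pi_b^{*}$ is its adjoint, and $\mathrm M_b$ is the diagonal Haar multiplier with symbol $\langle b\rangle_I$. Inserted into $[b,S]f=b(Sf)-S(bf)$, this writes $[b,S]$ as a finite sum of the operators $\Pi_b S$, $S\Pi_b$, $\Pi_b^{*}S$, $S\Pi_b^{*}$ and the commutator $[\mathrm M_b,S]$. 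The first four are estimated termwise: the $A_2$ theorem for Haar shifts bounds $S$ on $L^2(\mu)$ and on $L^2(\lambda)$ with operator norm controlled by $[\mu]_{A_2}$, $[\lambda]_{A_2}$, so each composition is under control once one has the two-weight paraproduct estimate
\[
\norm{\Pi_b\colon L^2(u)\to L^2(v)}\lesssim\norm{b}_{BMO_{\sqrt{u/v}}}\qquad(u,v\in A_2),
\]
with implied constant depending on $[u]_{A_2}$, $[v]_{A_2}$; one applies it with $(u,v)=(\mu,\lambda)$ for $\Pi_b$ and, since the adjoint $\Pi_b^{*}$ maps $L^2(\lambda^{-1})\to L^2(\mu^{-1})$, with $(u,v)=(\lambda^{-1},\mu^{-1})$ for $\Pi_b^{*}$, and $\sqrt{\lambda^{-1}/\mu^{-1}}=\rho$, so only $\norm{b}_{BMO_\rho}$ enters in both cases. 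In $[\mathrm M_b,S]$ the uncontrolled symbol $\langle b\rangle_I$ cancels: the difference of averages of $b$ across the finitely many scales that $S$ links telescopes through the Haar coefficients $\langle b,h_K\rangle$, so $[\mathrm M_b,S]$ is a sum of Haar-shift-type operators with symbols built from the $\langle b,h_K\rangle$, which obey an estimate of the same flavor by a variant of the argument below.

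The upper bound thus reduces entirely to the two-weight paraproduct estimate, and this is where I expect the main difficulty to lie. The mechanism is: by the weighted Littlewood--Paley inequality (valid as $v\in A_\infty$), $\norm{\Pi_b f}_{L^2(v)}^2\asymp\sum_I\abs{\langle b,h_I\rangle}^2\langle v\rangle_I\langle f\rangle_I^2$; one then passes to the (disbalanced) Haar systems adapted to $u$ and to $v$ to re-express the averages $\langle f\rangle_I$, and applies a weighted Carleson embedding theorem, using that $\{\abs{\langle b,h_I\rangle}^2\}_I$ is a $\rho$-Carleson sequence with constant $\norm{b}_{BMO_\rho}^2$ --- indeed for any interval $J$, $\sum_{I\subseteq J}\abs{\langle b,h_I\rangle}^2=\int_J\abs{b-\langle b\rangle_J}^2\,dx\le\norm{b}_{BMO_\rho}^2\,\rho(J)$. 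The delicate part is the bookkeeping needed to pass between the three weights $u$, $v$, $\sqrt{u/v}$ without creating an uncontrolled ``joint'' $A_2$ factor; the separate hypotheses $u,v\in A_2$ are what make this work, together with the consequence $\sqrt{u/v}\in A_2\subset A_\infty$ (from $[\sqrt{u/v}]_{A_2}\le[u]_{A_2}^{1/2}[v]_{A_2}^{1/2}$, by Cauchy--Schwarz).

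For the lower bound $\norm{b}_{BMO_\rho}\lesssim\norm{[b,H]}$ I would argue in the Coifman--Rochberg--Weiss spirit. Given an interval $I$, pick a translate $\widetilde I$ with $\abs{\widetilde I}\asymp\abs I$ and $\operatorname{dist}(I,\widetilde I)\asymp\abs I$, so the kernel $(x-y)^{-1}$ has constant sign and size $\asymp\abs I^{-1}$ on $I\times\widetilde I$; then for $x\in I$ and a weight $w$ supported in $\widetilde I$, $[b,H](w\mathbf 1_{\widetilde I})(x)\approx\abs I^{-1}w(\widetilde I)\bigl(b(x)-\mathbb E^{w}_{\widetilde I}(b)\bigr)$. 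Testing this --- and, via the adjoint $([b,H])^{*}=-[\bar b,H]$, which maps $L^2(\lambda^{-1})\to L^2(\mu^{-1})$ with the same norm, the dual statement --- against optimally chosen $w$ on $\widetilde I$ and sign-indicators on $I$ yields lower bounds for weighted oscillations of $b$ over $I$. Combining these with the $A_2$ conditions, the identity $\rho^2=\mu/\lambda$, and (since $\rho\in A_\infty$) the weighted John--Nirenberg equivalence of the $L^1$- and $L^2$-forms of $\norm{\cdot}_{BMO_\rho}$ recovers $\norm{b}_{BMO_\rho}\lesssim\norm{[b,H]}$; this direction can equivalently be cast as a weak factorization of the predual weighted Hardy space.
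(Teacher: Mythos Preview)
Your outline is essentially the paper's proof: Petermichl's shift reduction, the same paraproduct expansion of $[b,\Sh]$ (your $[\mathrm M_b,S]$ is the paper's remainder $\Pi_{\Sh f}b-\Sh(\Pi_f b)$), two-weight paraproduct bounds combined with the $A_2$ bound for $\Sh$, and a Coifman--Rochberg--Weiss argument for the converse. The execution differs in two places worth noting. First, the ``delicate bookkeeping'' you anticipate is made explicit as Proposition~\ref{p:B=B}: the paper reformulates $BMO_\rho$ as the testing quantity $\mathbf B_2[\mu,\lambda]^2=\sup_K\mu^{-1}(K)^{-1}\sum_{I\subset K}\widehat b(I)^2\langle\mu^{-1}\rangle_I^2\langle\lambda\rangle_I$ (and proves $\mathbf B_2[\mu,\lambda]\simeq\mathbf B_2[\lambda^{-1},\mu^{-1}]\simeq\|b\|_{BMO_\rho}$ by stopping-time arguments using $A_\infty$ for all four weights), and it is this form---not the bare $\rho$-Carleson condition on $\{\widehat b(I)^2\}$---in which the Carleson embedding for $\Pi_b$ and $\Pi_b^*$ goes through directly. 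Second, for the lower bound the paper uses the same-interval CRW identity, testing $[b,H]\mathbf 1_I$ and $[b,H](y\mathbf 1_I)$ against $\mathbf 1_I\,\mathrm{sgn}(b-\langle b\rangle_I)$, to obtain $\mu(I)^{-1}\int_I|b-\langle b\rangle_I|^2\,\lambda\,dx\lesssim\|[b,H]\|^2$, which it then shows (again by a stopping-time argument) dominates $\mathbf B_2[\mu,\lambda]$; your separated-interval variant is a legitimate alternative but takes a slightly longer path back to $BMO_\rho$.
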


The space $BMO_{\rho}=BMO$ when $\mu=\lambda$, and this case is well-known.
But, the general case is rather delicate, as there are  three independent objects in the commutator, 
the two weights and the symbol $ b$.  It is remarkable that there is a single condition involving all three which characterizes the boundedness of the commutator.  

Commutator estimates are interesting in that operator bounds are characterized in terms of function classes. 
They generalize Hankel operators, encode weak-factorization results for the Hardy space, and can be used 
to derive div-curl estimates.  Bloom himself applied his inequality to matrix weights. 
As far as we know, many of these topics remain unexplored in the setting of Bloom's 
inequality, and we hope to return to these topics in  future papers.  

Weighted estimates for commutators are complicated, since $ [b,H]$ is essentially the composition of 
$ H$ with paraproduct operators, see \eqref{e:expand} below.  This makes two weight estimates for 
commutators very difficult.  But, the key assumption of both weights being in $ A_2$ allows several 
proof strategies that are not available in the general two weight case.  A key property is the `joint $ A _{\infty }$ property,' namely that  one can quantitatively control Carleson sequences of intervals in both measures.  Bloom's argument is based upon interesting sharp function inequality for the upper bound, and involves an \emph{ad hoc} argument in the lower bound.  

We give an alternate proof of Theorem \ref{t:bloom} in the case when $p=2$. 
This allows us to present the key ideas for a more general result.  
There are different equivalent formulations of Bloom's $ BMO _{\rho }$ space, two of which are  detailed in Section \ref{s:equiv}. 
These formulations are ideal for characterizing certain two weight inequalities for paraproducts in Section \ref{s:paraproducts}.   
Then, $ [b, H]$ is a linear combination of compositions of $ H$ with paraproducts, plus 
an error term, as detailed in \eqref{e:expand}.  The Hilbert transform is bounded on $ L^2$ of an $ A_2$ weight, thus, an upper bound for the commutator follows in Section \ref{s:Hilbert}. 
For the lower bound, a standard argument reveals yet another formulation of the $ BMO _{\rho }$ condition in 
\eqref{e:NecCon}.  
In a subsequent paper the authors will show how Bloom's result can be extended to all Calder\'on-Zygmund operators in arbitrary dimension and when $1<p<\infty$.

As the reader will see, there are four different equivalent definitions of Bloom's $ BMO _{\rho }$ space.  
It is hardly clear which is the best condition.   Also, the $ A_2$ condition will be appealed to repeatedly. 
For both reasons, we do not attempt to track the dependence on the $ A_2$ norms of the two weights.     
In particular $ A  \lesssim B  $ means that there is an absolute constant $ C$,  so that 
$ A  \leq C ([\lambda ] _{A_2} [\mu ] _{A_2}) ^{C} B $.  

\section{Equivalences for Bloom's BMO}
\label{s:equiv}

One of the interesting points, implicit in Bloom's work, is that the $ BMO _{\rho }$ space presents itself in different 
formulations at different points of the proof.   In this section, we make these alternate definitions precise, 
and do so in the dyadic setting.  Thus, $ \mathcal D$ denotes the standard dyadic grid on $ \mathbb R $, and for 
$ I\in \mathcal D$, the Haar function associated to $ I$ is 
\begin{equation*}
h _{I}  \equiv \lvert  I\rvert ^{-1/2}  ( - \mathbf 1_{I _{-}}   + \mathbf 1_{I _{+}})   
\end{equation*}
where $ I _{\pm} $ are the left and right dyadic children of $ I$.  

For weights $\mu,\lambda\in A_2$, define
\begin{equation}
\label{e:Bloom1}
\mathbf B_{2} [\mu ,\lambda]\equiv \sup _{ K\in\mathcal{D}} \mu^{-1} (K) ^{-1/2} 
\left\lVert \sum_{I \;:\; I\subset K}  \widehat b (I)  \langle \mu^{-1}  \rangle_I h_I 
\right\rVert _{L ^{2}(\lambda)}. 
\end{equation} 
Above, $\widehat b (I)  = \langle b, h_I \rangle $, and $\La\cdot,\cdot\Ra$ denotes the usual inner product in \textit{unweighted} $L^2(\R)$.  
Note that by the boundedness of the square function on $L^2(w)$, \cite{Witwer}, this can equivalently be characterized by:
\begin{equation}
\label{e:Bloom2}
\mathbf B_{2} [\mu ,\lambda] ^2 \equiv \sup _{ K\in\mathcal{D}} \frac 1 {\mu^{-1} (K)} \sum_{I\subset K} \widehat b(I)^2 \left\langle \mu^{-1}\right\rangle_I^2 \left\langle \lambda\right\rangle_I 
\end{equation}

\begin{prop}\label{p:B=B} For $\mu,\lambda\in A_2$ there holds 
\begin{equation} \label{e:B=B}
\mathbf B _{2}[\mu, \lambda]  \simeq \mathbf B _{2}[\lambda^{-1}, \mu^{-1} ] \simeq  \lVert b\rVert _{BMO _{\rho }}
\end{equation}
where $  \lVert b\rVert _{BMO _{\rho }}$ denotes the dyadic variant of the $ BMO _{\rho }$ space.  
\end{prop}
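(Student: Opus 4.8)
The plan is to pass to Haar coefficients and reduce to a comparison of Carleson--type suprema. Expanding $b-\langle b\rangle_I$ in its Haar series over $I$ gives $\int_I \lvert b-\langle b\rangle_I\rvert^2\,dx=\sum_{J\subseteq I}\widehat b(J)^2$, so the dyadic $BMO_\rho$ norm has the Haar description
$$
\lVert b\rVert_{BMO_\rho}^2\;\simeq\;\sup_{K\in\mathcal D}\ \frac{1}{\lvert K\rvert\,\langle\rho\rangle_K^{2}}\sum_{I\subseteq K}\widehat b(I)^2,
$$
while $\mathbf B_2[\mu,\lambda]^2$ is already in such a form by \eqref{e:Bloom2}. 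There is moreover a built--in symmetry: the substitution $(\mu,\lambda)\mapsto(\lambda^{-1},\mu^{-1})$ fixes $\rho$, since $\lambda^{-1}/\mu^{-1}=\mu/\lambda$, and it interchanges $\mathbf B_2[\mu,\lambda]$ with $\mathbf B_2[\lambda^{-1},\mu^{-1}]$. Hence it is enough to prove, for an arbitrary pair of $A_2$ weights, the single equivalence $\mathbf B_2[\mu,\lambda]\simeq\lVert b\rVert_{BMO_\rho}$; the remaining equivalence is this one applied to $(\lambda^{-1},\mu^{-1})$.

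The local ingredient is the pointwise estimate, valid on every interval $I$,
$$
1\ \le\ \langle\mu^{-1}\rangle_I\,\langle\lambda\rangle_I\,\langle\rho\rangle_I^{2}\ \le\ [\mu]_{A_2}\,[\lambda]_{A_2},
$$
which follows from a couple of applications of the Cauchy--Schwarz inequality together with $\langle\mu\rangle_I\langle\mu^{-1}\rangle_I\le[\mu]_{A_2}$ and $\langle\lambda\rangle_I\langle\lambda^{-1}\rangle_I\le[\lambda]_{A_2}$; a similar argument shows that $\rho$ itself lies in $A_2$, with $[\rho]_{A_2}\le([\mu]_{A_2}[\lambda]_{A_2})^{1/2}$, so in particular $\rho\in A_\infty$ as well. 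Multiplying the displayed estimate by $\langle\mu^{-1}\rangle_I$ lets us replace $\langle\mu^{-1}\rangle_I^{2}\langle\lambda\rangle_I$ by $\langle\mu^{-1}\rangle_I\langle\rho\rangle_I^{-2}$ in \eqref{e:Bloom2}; using $\mu^{-1}(K)=\lvert K\rvert\langle\mu^{-1}\rangle_K$, the claim reduces to
$$
\sup_K\ \frac{1}{\lvert K\rvert\,\langle\mu^{-1}\rangle_K}\sum_{I\subseteq K}\widehat b(I)^2\,\frac{\langle\mu^{-1}\rangle_I}{\langle\rho\rangle_I^{2}}\ \simeq\ \sup_K\ \frac{1}{\lvert K\rvert\,\langle\rho\rangle_K^{2}}\sum_{I\subseteq K}\widehat b(I)^2.
$$

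I would establish the two remaining inequalities by a stopping--time (corona) decomposition. Fixing a top interval $K$, one selects the maximal $F\subsetneq K$ at which $\langle\mu^{-1}\rangle$ or $\langle\rho\rangle$ has changed, from its value on the previously selected interval, by a large fixed factor (depending only on the $A_2$ characteristics), iterates, and splits $\{I\subseteq K\}$ according to the minimal selected interval containing it. On each block the averages $\langle\mu^{-1}\rangle_I$ and $\langle\rho\rangle_I$ are comparable to their values on the associated $F$, so the weight factors pull out of the inner sum up to constants, and what remains is an instance of the Carleson bound on the other side, localized to $F$. The main obstacle is then the summation of these localized contributions over the selected intervals: the two suprema are not comparable scale by scale---for a given $K$ the relevant interval on the other side is usually a proper subinterval---so the corona must be arranged to locate where each condition is active, and the geometric series produced by the iteration has to be summed using the quantitative $A_\infty$ behaviour (doubling and the reverse H\"older inequality) of $\mu^{-1}$, $\lambda$ and $\rho$, through the fact that a Lebesgue--sparse family of stopping intervals is automatically sparse with respect to each of these weights. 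Extra care is needed here because $\rho^{2}=\mu/\lambda$ need not be locally integrable, so the stopping intervals must be controlled along the tree rather than via an $L^{2}$ maximal function bound on $\rho$. In the absence of the joint $A_\infty$ hypothesis both this step and the equivalence itself fail, as remarked in the introduction.
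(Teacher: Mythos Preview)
Your symmetry observation---that $(\mu,\lambda)\mapsto(\lambda^{-1},\mu^{-1})$ fixes $\rho$ and swaps the two $\mathbf B_2$ quantities---is correct and is a genuine organizational gain over the paper, which proves $\mathbf B_2[\mu,\lambda]\simeq\mathbf B_2[\lambda^{-1},\mu^{-1}]$ by a separate corona argument before ever touching $BMO_\rho$.  Your two–sided bound $1\le\langle\mu^{-1}\rangle_I\langle\lambda\rangle_I\langle\rho\rangle_I^{2}\le[\mu]_{A_2}[\lambda]_{A_2}$ is also correct and is exactly the substitute for the chain of H\"older/$A_2$ inequalities the paper uses.

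Where your sketch falls short is in the corona step itself.  The plan ``stop when $\langle\mu^{-1}\rangle$ or $\langle\rho\rangle$ changes by a fixed factor, pull the frozen averages outside, and recurse'' does not close for the direction $\lVert b\rVert_{BMO_\rho}\lesssim\mathbf B_2[\mu,\lambda]$.  With your normalisation, the recursion over the stopping children $S$ of a top interval $K$ requires $\sum_S\lvert S\rvert\langle\rho\rangle_S^{2}\le\tfrac12\lvert K\rvert\langle\rho\rangle_K^{2}$; but at those $S$ where $\langle\rho\rangle_S\approx C\langle\rho\rangle_K$, one only has $\sum_S\lvert S\rvert\lesssim\lvert K\rvert/C$ (even after reverse H\"older, only $\lesssim\lvert K\rvert/C^{1+\varepsilon}$ for a typically small $\varepsilon$), so this sum is of order $C\lvert K\rvert\langle\rho\rangle_K^{2}$ and the contraction fails.  ``Lebesgue–sparse implies $w$–sparse'' does not help here because the normalising quantity is $\lvert K\rvert\langle\rho\rangle_K^{2}$, not $\rho(K)$.

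The paper circumvents exactly this obstruction by working in two stages.  It first proves the $L^{1}$ variant
\[
\sup_{I_0}\ \frac{1}{\rho(I_0)}\int_{I_0}\Bigl[\sum_{I\subset I_0}\frac{\widehat b(I)^2}{\lvert I\rvert}\mathbf 1_I\Bigr]^{1/2}\,dx\ \lesssim\ \mathbf B_2[\mu,\lambda],
\]
using the corona estimate \eqref{e:B=>}, Cauchy--Schwarz, and H\"older; crucially, for this $L^{1}$ quantity the recursion is in $\rho$–measure, and $\sum_S\rho(S)\le\tfrac12\rho(I_0)$ \emph{does} follow from $A_\infty$.  Only afterwards is the $L^{2}$ version recovered, via a second stopping time that is a John--Nirenberg condition on the square function of $b$ (the level being set at $\langle\rho\rangle_{I_0}$).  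For the reverse inequality $\mathbf B_2[\mu,\lambda]\lesssim\lVert b\rVert_{BMO_\rho}$ the paper likewise needs a John--Nirenberg stopping condition on $b$ (condition~(3) in its proof) in addition to the weight conditions; freezing weight averages alone again does not suffice.  Your sketch does not contain either of these mechanisms, and they are where the real work lies.
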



\begin{proof}
We prove the first equivalence in \eqref{e:B=B}.  
Fix a interval $ I_0$ for which we  verify that 
\begin{equation*}
\sum_{I \subset I_0}  \widehat b (I) ^2 \langle \lambda \rangle_I ^2 \left\langle \mu^{-1}\right\rangle_I \lesssim 
\mathbf B _{2} [\mu ,\lambda] ^2 \lambda (I_0). 
\end{equation*}
This will show that $ \mathbf B _{2}[\lambda^{-1}, \mu^{-1} ] \lesssim\mathbf B _{2}[\mu, \lambda] $, 
and by symmetry the reverse inequality holds.

Construct stopping intervals by taking $ \mathcal S$ to be the maximal subintervals  $I\subset  I_0$ such that 
\begin{equation*}
\langle \lambda \rangle_{I} > C \langle \lambda \rangle_{I_0} \quad \textup{or} \quad 
\langle \lambda \rangle_{I} < C ^{-1} \langle \lambda \rangle_{I_0}, 
\end{equation*}
or the same conditions hold for $ \mu^{-1} $.  By the $ A _{\infty } $ properties of $ \mu , \mu ^{-1}, \lambda  $  and $ \lambda ^{-1}$, for  $C= C_{\mu,\lambda}>1$ sufficiently large, there holds 
\begin{equation*}
\sum_{S\in \mathcal S} \lambda (S) < \tfrac 12 \lambda (I_0).  
\end{equation*}
The small constant in front implies that we can recurse inside these intervals, and so it remains to bound the 
sum over intervals `above' the stopping intervals.  

Let $ \mathcal I $ denote that $ I\subset I_0$ which are not contained in any stopping interval. Note that the 
$ \mathbf B _{2} [\mu ,\lambda] $ condition implies that   
\begin{equation} \label{e:B=>}
\sum_{I' \in \mathcal I}
\widehat  b (I') ^2 \lesssim  \mathbf B _{2} [\mu ,\lambda]  ^2 \frac {  \mu  ^{-1}(I_0)} { \langle \mu^{-1}  \rangle _{I_0} ^2  \langle \lambda \rangle_{I_0}} =\mathbf B _{2} [\mu ,\lambda]  ^2 \frac {  \lvert  I_0\rvert } { \langle \mu^{-1}  \rangle _{I_0}  \langle \lambda \rangle_{I_0}}  
\end{equation}
Therefore, 
\begin{align*}
\sum_{I  \in \mathcal I}  \widehat b (I) ^2 \langle \lambda \rangle_I ^2\left\langle \mu^{-1}\right\rangle_I 
& \lesssim 
\langle \lambda \rangle _{I_0} ^2 \langle \mu^{-1}  \rangle _{I_0}  \sum_{I  \in \mathcal I}  \widehat b (I)^2  
\\
& \lesssim  \mathbf B _{2} [\mu ,\lambda]^2  \lambda (I_0).  
\end{align*}
Hence we have that $\mathbf B_2[\lambda^{-1},\mu^{-1}]\lesssim \mathbf B_{2}[\mu,\lambda]$.  This argument is symmetric and so the result follows.

\bigskip 

We now show that 
$
\lVert b\rVert _{BMO _{\rho }} \lesssim \mathbf B _{2}[\mu ,\lambda] 
$, establishing first an intermediate result.  
Use the same stopping interval construction as in the previous argument.  Then, we have by Cauchy-Schwartz and \eqref{e:B=>},  
\begin{align*} 
\int _{I_0} \biggl[ 
\sum_{I\in \mathcal I} \frac {\widehat b (I) ^2 } {\lvert  I\rvert } \mathbf 1_{I}
\biggr] ^{1/2} \;dx 
& \lesssim  \mathbf B _{2} [\mu ,\lambda]    \frac {\lvert  I_0\rvert } { [\langle \mu^{-1}  \rangle _{I_0} \langle \lambda \rangle _{I_0} ] ^{1/2} } 
\\&
\lesssim \mathbf B _{2} [\mu ,\lambda] \frac {\lvert  I_0\rvert ^2  } 
{ [\mu^{-1} (I_0) \lambda (I_0)   ]}   & \textup{rewrite}
\\
&  \lesssim \mathbf B _{2} [\mu ,\lambda] \frac {\lvert  I_0\rvert ^2  }  {( \mu^{-1/2} \lambda ^{1/2} )(I_0)}  
& \textup{by H\"older's}
\\&\lesssim \mathbf B _{2} [\mu ,\lambda]  \rho (I_0)  & \textup{$ \rho = (\mu  /\lambda ) ^{1/2}  \in A_2$. }
\end{align*}
Here, we use the estimate \eqref{e:B=>}, then H\"older's inequality, to get to the product of 
the two $ A_2$ weights. The product is again an $ A_2$ weight, which is the last property used. 

It follows from this that we have proved a bound for an  $ L ^{1}$ BMO condition, namely 
\begin{equation} \label{e:1BMO}
\sup _{I_0} \frac 1 {\rho (I_0)}
\int _{I_0} \biggl[ 
\sum_{I \;:\; I\subset I_0} \frac {\widehat b (I) ^2 } {\lvert  I\rvert } \mathbf 1_{I}
\biggr] ^{1/2} \;dx  \lesssim \mathbf B _{2} [\mu ,\lambda]  . 
\end{equation}
Bloom's definition however includes a square inside the integral, see \eqref{e:bmo-rho}. 
To show that the condition above is the same as in \eqref{e:bmo-rho}, run another stopping condition, 
and again appeal to the fact that $ \rho \in A_2$.  

Let $ \mathcal S$ be the maximal  intervals  $S\subset  I_0$ such that 
\begin{equation*}
\sum_{I \;:\;  S \subset I\subset I_0} \frac {\widehat b (I) ^2 } {\lvert  I\rvert } \ge C 
\mathbf B _{2} [\mu ,\lambda]  ^2  \langle \rho  \rangle _{I_0}  
\end{equation*}
For $ C = C _{\rho }$ sufficiently large, there holds 
\begin{equation*}
\sum_{S \in \mathcal S} \rho (S) \leq \tfrac 12 \rho (I_0), 
\end{equation*}
and so we can recurse on these intervals.   Let $ \mathcal I$ be those intervals contained in $ I_0$ but not contained 
in any $ S\in \mathcal S$. There holds 
\begin{align*}
\int _{I_0}  \sum_{I \in \mathcal I} \frac {\widehat b (I) ^2 } {\lvert  I\rvert } \mathbf 1_{I}
  \;dx  
  & \lesssim 
  \mathbf B _{2} [\mu ,\lambda] ^2   \langle \rho  \rangle _{I_0}  \lvert  I_0\rvert \lesssim 
  \mathbf B _{2} [\mu ,\lambda]  ^2   \rho  (I_0).  
\end{align*}
That implies that $ \lVert b\rVert _{BMO _{\rho }} \lesssim \mathbf B _{2}[\mu ,\lambda] $.

\bigskip 

We show that $ \mathbf B _{2}[\mu ,\lambda]    \lesssim \lVert b\rVert _{BMO _{\rho }}$.  
Fix the interval $ I_0$ on which we will verify the $ \mathbf B _{2}[\mu ,\lambda] $ condition.  
We need stopping conditions, so let $ \mathcal S$ be the maximal dyadic intervals $ I\subset I_0$ such that 
one of three conditions is met: 
\begin{itemize}
\item[(1)] $ \langle  \mu^{-1}  \rangle _{I} > C \langle  \mu^{-1}  \rangle _{I_0}$, 
\item[(2)] $ \langle  \rho  \rangle _{I} > C \langle  \rho  \rangle _{I_0}$, or 
\item[(3)]
$
\sum_{I' \;:\; I \subset I'\subset I_0}  \widehat b (I') ^2 \lvert  I'\rvert ^{-1} 
 > [C_b   \langle  \rho  \rangle _{I_0} ] ^2 $. 
\end{itemize}
For $1\leq j\leq 3$, let $ \mathcal S_j$ be those intervals $ S\in \mathcal S$ which meet the  condition $ (j)$. For the first condition, there holds 
\begin{equation*}
\sum_{S\in \mathcal S_1} \mu^{-1} (S) \le \tfrac 14 \mu^{-1} (I_0), 
\end{equation*}
and so we can recurse on those intervals.  For the second condition, there holds 
\begin{equation*}
\sum_{S\in \mathcal S_2}  \lvert  S\rvert  \le  \epsilon _C   \lvert  I_0\rvert , 
\end{equation*}
by the $ A _{\infty }$ condition for $ \rho $.  Here $ \epsilon _C $ can be made arbitrarily small. 
The same condition holds for $ \mathcal S_3$, but this is just the usual John-Nirenberg estimate. 
The weight $ \mu^{-1}  $ is also $ A_ \infty $, so that for $ \epsilon _C $ sufficiently small, we see that 
\begin{equation*}
\sum_{S\in \mathcal S} \mu^{-1} (S) \le \tfrac 12 \mu^{-1} (I_0), 
\end{equation*}
and so we can recurse inside this collection. It remains to estimate the sum over $ I \subset I_0$ which 
are not contained in a interval $ S\in \mathcal S$. Calling this collection $ \mathcal I$, we have 
\begin{align*}
\int _{I_0} 
\sum_{I' \in \mathcal I}  \widehat b(I')^2  \langle \mu^{-1}  \rangle_I ^2  \frac {\mathbf 1_{I}(x)} {\lvert  I'\rvert } \; \lambda(x) dx
&\lesssim  \langle  \mu^{-1}  \rangle _{I_0} ^2 \langle \rho  \rangle_{I_0} ^2  \lambda (I_0) 
\\
& \lesssim\mu^{-1} (I_0) \frac {\mu^{-1}(I_0)    \mu (I_0)  \lambda^{-1} (I_0) \lambda (I_0) } {\lvert  I_0\rvert ^{4} }
\lesssim \mu^{-1} (I_0). 
\end{align*}
Here, we have just used the stopping conditions, then used the easy bound 
$ \rho (I_0) ^2 \le   \mu (I_0)  \lambda^{-1} (I_0) $, and finally 
appealed to the $ A_2$ conditions on $ \mu^{-1} $ and $ \lambda$.  
\end{proof}

\section{Two Weight Inequalities for Paraproduct Operators}
\label{s:paraproducts}

The `paraproduct' operator with symbol function $b$, and its dual,  are defined by 
\begin{align*}
\Pi_b&\equiv \sum_{I\in\mathcal{D}} \widehat{b}(I) h_I\otimes \frac{\mathsf{1}_I}{\left\vert I\right\vert}, 
\\ \textup{and} \qquad 
\Pi_b^{\ast}&\equiv \sum_{I\in\mathcal{D}} \widehat{b}(I) \frac{\mathsf{1}_I}{\left\vert I\right\vert}\otimes h_I. 
\end{align*}
Note that   $\Pi_b^{\ast}$ is the adjoint of the paraproduct on \textit{unweighted} $L^2(\mathbb{R})$. Using the identification $\left(L^2(w)\right)^* \equiv L^2(w^{-1})$, with pairing $\left<f,g\right>$ for all $f \in L^2(w)$ and $g \in L^2(w^{-1})$,  we can see that
	$$\text{The adjoint of } \Pi_b : L^2(\mu) \rightarrow L^2(\lambda) \text{ is } \Pi_b^* : L^2(\lambda^{-1}) \rightarrow L^2(\mu^{-1}); $$
	$$\text{The adjoint of } \Pi^*_b : L^2(\mu) \rightarrow L^2(\lambda) \text{ is } \Pi_b : L^2(\lambda^{-1}) \rightarrow L^2(\mu^{-1}). $$

The characterization of the boundedness of these operators between weighted spaces $L^2(\mu)$ and $L^2(\lambda)$ is as follows.  

\begin{thm}
\label{t:paraproduct}
Let $\mu,\lambda\in A_2$.  Suppose that $\mathbf B_2[\mu,\lambda]$ and $\mathbf B_2[\lambda^{-1},\mu^{-1}]$ finite.
Then we have
\begin{eqnarray}
\label{e:Pib1} & & \left\Vert \Pi_b:L^2(\mu)\to L^2(\lambda)\right\Vert  = \left\Vert \Pi^*_b:L^2(\lambda^{-1})\to L^2(\mu^{-1})\right\Vert  \simeq    \mathbf B_2[\mu,\lambda] \\
\label{e:Pib*1} & &  \left\Vert \Pi_b^{\ast}:L^2(\mu)\to L^2(\lambda)\right\Vert = \left\Vert \Pi_b:L^2(\lambda^{-1})\to L^2(\mu^{-1})\right\Vert  \simeq   \mathbf B_2[\lambda^{-1},\mu^{-1}].
\end{eqnarray}
\end{thm}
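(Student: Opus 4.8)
The plan is to prove \eqref{e:Pib1} and observe that \eqref{e:Pib*1} follows by the weight-swap symmetry $\mathbf B_2[\mu,\lambda] \to \mathbf B_2[\lambda^{-1},\mu^{-1}]$ already recorded in Proposition \ref{p:B=B} together with the adjoint identities displayed just above the theorem. For \eqref{e:Pib1}, the identity of the two operator norms is immediate from those adjoint relations, so the real content is the two-sided estimate $\left\Vert \Pi_b : L^2(\mu)\to L^2(\lambda)\right\Vert \simeq \mathbf B_2[\mu,\lambda]$.

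For the \emph{lower bound}, I would test $\Pi_b$ on the natural functions $f = \mathbf 1_K \mu^{-1}$ for $K \in \mathcal D$. Then $\langle f, h_I\rangle = \langle \mu^{-1}\rangle_I \langle h_I \rangle_I \lvert I\rvert$-type quantities; more precisely $\mathbb E_I f = \langle \mu^{-1}\rangle_I$ for $I \subset K$, so that $\Pi_b f = \sum_{I \subset K} \widehat b(I)\langle \mu^{-1}\rangle_I h_I$ plus a harmless tail from intervals containing $K$. Since $\Vert f\Vert_{L^2(\mu)}^2 = \mu^{-1}(K)$, the definition \eqref{e:Bloom1} of $\mathbf B_2[\mu,\lambda]$ is exactly the statement that these test functions certify $\mathbf B_2[\mu,\lambda] \lesssim \Vert \Pi_b\Vert$. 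This direction is essentially bookkeeping.

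The \emph{upper bound} is the main obstacle and is where the stopping-time machinery enters. I would fix $f \in L^2(\mu)$, expand $\Vert \Pi_b f\Vert_{L^2(\lambda)}^2$ using the square function bound of \cite{Witwer} on $L^2(\lambda)$, reducing to controlling $\sum_I \widehat b(I)^2 (\mathbb E_I f)^2 \langle \lambda\rangle_I$. Now build a corona decomposition of $\mathcal D(K)$ relative to $f$: let $\mathcal S$ be the maximal stopping intervals where $\mathbb E_S \lvert f\rvert^\mu$ roughly doubles (in the $\mu$-measure), with the usual $A_\infty$/Carleson control $\sum_{S' \subset S} \mathbb E_{S'}^\mu(\lvert f\rvert)^2 \mu(S') \lesssim \Vert f\Vert_{L^2(\mu)}^2$. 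On each corona one has $\mathbb E_I f \approx \mathbb E_S^\mu(f)\langle\mu\rangle_I$, roughly, so the local sum $\sum_{I \in \mathrm{cor}(S)} \widehat b(I)^2 \langle\mu\rangle_I^2 \langle\lambda\rangle_I$ is controlled by the \eqref{e:Bloom2} form of $\mathbf B_2[\mu,\lambda]^2$ times $\mu^{-1}(S)$ — wait, here one must be careful that the weights in the corona estimate and in $\mathbf B_2$ match up; this is exactly the point where one uses the $A_2$ relation $\langle\mu\rangle_I\langle\mu^{-1}\rangle_I \simeq 1$ to convert $\langle\mu\rangle_I^2$ factors into $\langle\mu^{-1}\rangle_I^{-2}$ and land on \eqref{e:Bloom2}. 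Summing the local pieces against the Carleson packing $\sum_S \mathbb E_S^\mu(\lvert f\rvert)^2 \mu(S) \lesssim \Vert f\Vert_{L^2(\mu)}^2$ then gives $\Vert \Pi_b f\Vert_{L^2(\lambda)}^2 \lesssim \mathbf B_2[\mu,\lambda]^2 \Vert f\Vert_{L^2(\mu)}^2$.

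The delicate step — and the one I expect to take the most care — is verifying that a \emph{single} stopping-time construction (doubling of $\langle f\rangle^\mu$, together with doubling of $\langle\mu^{-1}\rangle$ and of $\langle\lambda\rangle$) simultaneously yields the Carleson packing in $\mu$ \emph{and} the near-constancy of the averages needed to pull $\langle\mu^{-1}\rangle_I^2\langle\lambda\rangle_I$ out of the corona sum and replace it by its value at the stopping interval, so that what remains is literally a sum over intervals `above the stopping intervals' estimated by \eqref{e:Bloom2}. This is precisely the pattern already used three times in the proof of Proposition \ref{p:B=B}, so I would model the argument on that, invoking the joint $A_\infty$ property of $\mu,\mu^{-1},\lambda,\lambda^{-1}$ to guarantee the $\tfrac12$-Carleson bound that makes the recursion converge.
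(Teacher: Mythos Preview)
Your lower bound is correct and matches the paper exactly: test $\Pi_b$ on $f=\mathbf 1_K\mu^{-1}$ and read off \eqref{e:Bloom1}.

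For the upper bound, your overall plan (pass to the square function, then stopping times) is viable, but the specific averages you freeze are wrong, and the attempted $A_2$ repair fails. The claim ``$\mathbb E_I f \approx \mathbb E_S^{\mu}(f)\langle\mu\rangle_I$'' is not true: there is no inequality $\langle |f|\rangle_I \le \mathbb E_I^{\mu}(|f|)\langle\mu\rangle_I$ in general, and freezing $\langle\mu\rangle$, $\langle\mu^{-1}\rangle$ does not produce one. Even granting it, you would land on $\sum_I \widehat b(I)^2\langle\mu\rangle_I^2\langle\lambda\rangle_I$, and your proposed fix ``convert $\langle\mu\rangle_I^2$ into $\langle\mu^{-1}\rangle_I^{-2}$ via $A_2$'' gives the \emph{reciprocal} of the factor $\langle\mu^{-1}\rangle_I^{2}$ that actually appears in \eqref{e:Bloom2}, so the match you need never happens. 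The correct identity is
\[
\langle f\rangle_I=\langle\mu^{-1}\rangle_I\,\mathbb E_I^{\mu^{-1}}(f\mu),
\]
which is exactly what the paper uses. With this, after your square-function step one has
\[
\sum_I \widehat b(I)^2\langle f\rangle_I^2\langle\lambda\rangle_I
=\sum_I \bigl(\widehat b(I)^2\langle\mu^{-1}\rangle_I^2\langle\lambda\rangle_I\bigr)\,\bigl(\mathbb E_I^{\mu^{-1}}(f\mu)\bigr)^2,
\]
and \eqref{e:Bloom2} says the bracketed sequence is $\mu^{-1}$-Carleson with constant $\mathbf B_2[\mu,\lambda]^2$. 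A single application of the Carleson Embedding Theorem (equivalently, stopping on the $\mu^{-1}$-averages of $f\mu$) gives $\lesssim \mathbf B_2[\mu,\lambda]^2\|f\mu\|_{L^2(\mu^{-1})}^2=\mathbf B_2[\mu,\lambda]^2\|f\|_{L^2(\mu)}^2$. No auxiliary stopping on $\langle\mu^{-1}\rangle$ or $\langle\lambda\rangle$ is needed.

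For comparison, the paper's proof of sufficiency is organized slightly differently: it argues by duality, writing $|\langle\Pi_b f,g\rangle_{L^2(\lambda)}|$, applying Cauchy--Schwarz, then invoking the Carleson Embedding Theorem for the $f$-factor (using the same $\mu^{-1}$-Carleson sequence above) together with the Petermichl--Pott estimate \eqref{e:PPott} for the $g$-factor. Your square-function route, once corrected as above, is a legitimate and slightly more direct alternative that avoids the duality step and \eqref{e:PPott}.
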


\begin{proof}[Proof of Sufficiency in Theorem \ref{t:paraproduct}]
Before the proof, recall that for any weight $w\in A_2$ we have:
\begin{equation}\label{e:PPott}
\sum_{I \in \mathcal{D}} |\widehat{f}(I)|^2 \frac{1}{\left<w\right>_I} \lesssim [w]_{A_2} \|f\|^2_{L^2(w^{-1})},
\end{equation}
which can be found in \cite{PetermichlPott}.  

Note that since  $\mathbf B_2[\mu,\lambda]$ and $\mathbf B_2[\lambda^{-1},\mu^{-1}]$ are finite:
\begin{eqnarray}
\sum_{I\subset J} \widehat b(I)^2\left\langle \lambda\right\rangle_{I} \left\langle \mu^{-1} \right\rangle_{I}^2  & \leq & \mathbf B_2[\mu,\lambda]^2 \mu^{-1}(J)\quad\forall J\in\mathcal{D}\label{e:CET1nec}\\
\sum_{I\subset J} \widehat b(I)^2 \left\langle \mu^{-1}\right\rangle_I \left\langle \lambda\right\rangle_I^2 & \leq & \mathbf B_2[\lambda^{-1},\mu^{-1}]^2\lambda(J) \quad\forall J\in\mathcal{D}\label{e:CET2nec}.
\end{eqnarray}
These conditions will imply the certain measures are Carleson, and so we can then appeal to the Carleson Embedding Theorem to control terms directly.

We proceed by duality to analyze the operator $\Pi_b$.  Note that for $f\in L^2(\mu)$ and $g\in L^2(\lambda)$ we have
\begin{eqnarray*}
\left\vert \left\langle \Pi_b f,g\right\rangle_{L^2(\lambda)} \right\vert & \leq  & \sum_{I\in\mathcal{D}}  \left\vert\widehat{b}(I) \left\langle f \right\rangle_I \left\langle g, h_I\right\rangle_{L^2(\lambda)}\right\vert = \sum_{I\in\mathcal{D}}  \left\vert \widehat{b}(I) \left\langle \mu^{-1}\right\rangle_{I} \mathbb{E}_{I}^{\mu^{-1}}(f\mu) \left\langle g, h_I\right\rangle_{L^2(\lambda)}\right\vert\\
& = & \sum_{I\in\mathcal{D}} \left\vert\widehat{b}(I) \left\langle \mu^{-1}\right\rangle_{I} \left\langle\lambda\right\rangle_{I}^{\frac{1}{2}} \left\langle\lambda\right\rangle_{I}^{-\frac{1}{2}}\mathbb{E}_{I}^{\mu^{-1}}(f\mu) \left\langle g, h_I\right\rangle_{L^2(\lambda)}\right\vert\\
& \leq & \left(\sum_{I\in\mathcal{D}}  \hat{b}(I)^2 \left\langle \mu^{-1}\right\rangle_{I}^2\left\langle \lambda\right\rangle_{I} \mathbb{E}_{I}^{\mu^{-1}}(f\mu)^2 \times \sum_{I\in\mathcal{D}} \frac{\left\vert \widehat{g\lambda}(I)\right\vert^2}{\left\langle \lambda\right\rangle_I}\right)^{\frac{1}{2}}\\
& \leq & 
\mathbf B_2[\mu,\lambda]\left\Vert \mu f\right\Vert_{L^2(\mu^{-1})} \left\Vert g\lambda\right\Vert_{L^2(\lambda^{-1})}\\
& = & 
\mathbf B_2[\mu,\lambda]\left\Vert f\right\Vert_{L^2(\mu)} \left\Vert g\right\Vert_{L^2(\lambda)}.
\end{eqnarray*}
Here, we have used the Carleson Embedding Theorem to control the term with the averages on $f$, which is applicable by \eqref{e:CET1nec}, and we have used \eqref{e:PPott} to handle the other term.  The claimed estimate, \eqref{e:Pib1}, on the norm of $\Pi_b:L^2(\mu)\to L^2(\lambda)$ follows.

We next turn to controlling $\Pi_b^*$ and again resort to duality to estimate the norm.  Indeed, we have
\begin{eqnarray*}
\left\vert \left\langle \Pi_b^* f,g\right\rangle_{L^2(\lambda)} \right\vert & \leq  & \sum_{I\in\mathcal{D}}  \left\vert \widehat{b}(I) \left\langle g\lambda \right\rangle_I \left\langle f, h_I\right\rangle_{L^2}\right\vert =  \sum_{I\in\mathcal{D}}  \left\vert\widehat{b}(I) \left\langle \mu^{-1}\right\rangle_I^{\frac{1}{2}} \left\langle \lambda\right\rangle_{I} \mathbb{E}_I^{\lambda}\left\langle g \right\rangle_I \frac{\widehat{f}(I)}{\left\langle \mu^{-1}\right\rangle_I^{\frac{1}{2}}}\right\vert\\
& \leq & \left(\sum_{I\in\mathcal{D}}  \hat{b}(I)^2 \left\langle \mu^{-1}\right\rangle_{I}\left\langle \lambda\right\rangle_{I}^{2} \mathbb{E}_{I}^{\lambda}(g)^2 
\times 
\sum_{I\in\mathcal{D}} \frac{\widehat{f}(I)^2}{\left\langle \mu^{-1}\right\rangle_I}\right)^{\frac{1}{2}}\\
& \leq &
\mathbf B_2[\lambda^{-1},\mu^{-1}]\left\Vert f\right\Vert_{L^2(\mu)} \left\Vert g\lambda\right\Vert_{L^2(\lambda)},
\end{eqnarray*}
with the inequality following by the Carleson Embedding Theorem since we are imposing condition \eqref{e:CET2nec} and also using \eqref{e:PPott}.  Combining all these estimates, we see that \eqref{e:Pib*1} holds.
\end{proof}

\begin{proof}[Proof of Necessity in Theorem \ref{t:paraproduct}]
Fix an interval $ I$, and choose $ f=\mu^{-1}\mathbf 1_I$.  Then we have $ \lVert f\rVert _{L ^{2} (\mu )} = \mu^{-1} (I) ^{1/2} $.   Then we have:  
\begin{align*}
\left\lVert \sum_{I \;:\; I\subset I} 
 \widehat b (I)  \langle \mu^{-1}  \rangle_I h_I 
\right\rVert _{L ^{2}(\lambda)}
\le \lVert \Pi _{b} f \rVert _{L ^{2} (\lambda)} \leq \left\Vert \Pi_b:L^2(\mu)\to L^2(\lambda)\right\Vert \mu^{-1} (I) ^{1/2}, 
\end{align*}
with the last inequality following from the assumed norm boundedness of the paraproduct.  Hence, we have that:
	\begin{equation} \label{E:PibNec}
	\mathbf{B}_2[\mu,\lambda]\leq \left\Vert \Pi_b:L^2(\mu)\to L^2(\lambda)\right\Vert.
	\end{equation}

In light of our previous discussion about adjoints, proving the necessity for $\Pi_b$ will address $\Pi_b^*$ as well.  Since if $\Pi^*_b : L^2(\mu) \rightarrow L^2(\lambda)$ is bounded, then $\Pi_b : L^2(\lambda^{-1}) \rightarrow L^2(\mu^{-1})$ is bounded, with the same operator norm. From \eqref{E:PibNec}, we have then
	$$B_2[\lambda^{-1},\mu^{-1}] \leq \left\Vert \Pi_b:L^2(\lambda^{-1})\to L^2(\mu^{-1})\right\Vert = 
	\left\Vert \Pi^*_b:L^2(\mu)\to L^2(\lambda)\right\Vert.$$

\end{proof}

\section{Proof of Bloom's Theorem, $ p=2$} 
\label{s:Hilbert}

%


For the sufficiency, we use Petermichl's beautiful observation that the Hilbert transform can be recovered through an appropriate average of Haar shifts, \cite{MR1756958}. On  the dyadic lattice $\mathcal{D}$ with Haar basis $\{h_I\}_{I\in\mathcal{D}}$, we define $\Sh h_I=\frac{1}{\sqrt{2}}(h_{I_-}-h_{I_+})$, which is Petermichl's Haar shift operator.  Then, the Hilbert transform is an average of shift operators, with the average performed over the class of 
all dyadic grids.  In particular, to prove norm inequalities for the Hilbert transform, it suffices to prove them 
for the Haar shift operator, which has proven to be a powerful proof technique.

The commutator with the Haar shift operator has an explicit expansion in terms of the paraproducts and $\Sh $, see \cite{MR1756958} for this decomposition,
 \begin{equation}
\label{e:expand}
[b,\Sh]f=\Sh(\Pi_bf)-\Pi_b(\Sh f)+\Sh (\Pi_b^* f)-\Pi_b^*(\Sh f)+\Pi_{\Sh f} b-\Sh(\Pi_f b).
\end{equation}
For any $w\in A_2$, $\left\Vert \Sh:L^2(w)\to L^2(w)\right\Vert\lesssim [w]_{A_2}$, \cite{MR2354322}.  
Thus, for the first four terms above, we merely have to control the paraproduct term. But this is done in   Theorem \ref{t:paraproduct}. 
Thus, we see that:
$$
\left\Vert [b,\Sh]f\right\Vert_{L^2(\lambda)}\lesssim \left(\mathbf B_2[\lambda^{-1},\mu^{-1}]+\mathbf B_2[\mu,\lambda]\right)\left\Vert f\right\Vert_{L^2(\mu)}+\left\Vert \Pi_{\Sh f} b-\Sh(\Pi_f b)\right\Vert_{L^2(\lambda)}.
$$
The last two terms in \eqref{e:expand} have more cancellation than the other four terms. By direct calculation, 
$$
\Pi_{\Sh f} b-\Sh(\Pi_f b)=\sum_{I\in\mathcal{D}} \frac{\widehat{b}(I)}{\left\vert I\right\vert^{\frac{1}{2}}} \widehat{f}(I) (h_{I_+}-h_{I_-}).
$$
We then show that:
\begin{eqnarray*}
\left\Vert \Pi_{\Sh f}  b-\Sh(\Pi_f b)\right\Vert_{L^2(\lambda)}^2 & \lesssim & 
\left\Vert S(\Pi_{\Sh f}  b-\Sh(\Pi_f b))\right\Vert_{L^2(\lambda)}^2\\
& \lesssim &
\sum_{I\in\mathcal{D}} \frac{\widehat b(I)^2}{\left\vert I\right\vert} \left\langle \lambda\right\rangle_I \hat{f}(I)^2\\
& = & 
\sum_{I\in\mathcal{D}} \frac{\widehat b(I)^2}{\left\vert I\right\vert} \left\langle \lambda\right\rangle_I \left\langle \mu^{-1}\right\rangle_I \frac{\hat{f}(I)^2}{\left\langle \mu^{-1}\right\rangle_I}.
\end{eqnarray*}
Note that \eqref{e:CET1nec} and \eqref{e:CET2nec} imply that:
$$
\sup_{I\in\mathcal{D}} \frac{ \hat{b}(I)^2\left\langle\lambda\right\rangle_I\left\langle\mu^{-1}\right\rangle_I}{\left\vert I\right\vert}\leq \mathbf B_2[\lambda^{-1},\mu^{-1}]\mathbf B_2[\mu,\lambda].
$$
And, so we then have:
\begin{eqnarray*}
\left\Vert \Pi_{\Sh f} b-\Sh(\Pi_f b)\right\Vert_{L^2(\lambda)}^2 & \lesssim &
\mathbf B_2[\lambda^{-1},\mu^{-1}]\mathbf B_2[\mu,\lambda]\sum_{I\in\mathcal{D}} \frac{ \hat{f}(I)^2}{\left\langle \mu^{-1}\right\rangle_I}\\
& \lesssim & 
\mathbf B_2[\lambda^{-1},\mu^{-1}]\mathbf B_2[\mu,\lambda] \left\Vert f\right\Vert_{L^2(\mu)}^2.
\end{eqnarray*}

We now turn to the converse result.  Assume that there holds 
\begin{equation*}
\lVert   [ b , H]  : L ^{2} (\mu ) \mapsto L ^{2} (\lambda)\rVert <\infty.
\end{equation*}
Using an argument of  Coifman--Rochberg--Weiss, \cite{CRW}, we 
derive a new necessary condition, and show that it dominates Bloom's condition. 

Let $ I$ be an interval centered at the origin, and set $ S_I = \mathbf 1_{I}\, \textup{sgn} (b - \langle b \rangle_I)$.  We have 
\begin{align*}
\lvert  I\rvert \cdot \bigl\lvert  (b& - \langle b \rangle_I) \mathbf 1_{I}\bigr\rvert 
\\
&= 
\int _{I} \frac {b (x) - b (y)} { x-y }(x-y)S_I (x) \mathbf 1_{I} (y) \; dy 
\\
&= x S_I (x)  \bigl\{[b, H ]  (\mathbf 1_{I} (y)) \bigr\} (x)-S_I (x)  \bigl\{[b, H ]   (y\mathbf 1_{I} (y)) \bigr\} (x).
\end{align*}
The assumed norm inequality 
then implies that 
\begin{align}
\lvert  I\rvert ^2 \int _{I} \lvert  b(x) - \langle b \rangle _I \rvert ^2 \; \lambda(x) dx 
\lesssim  \lvert  I\rvert ^{2} \mu (I) \lVert  [ b, H] : L ^{2} (\mu) \mapsto L ^{2} (\lambda)\rVert^2.   
\end{align}
The assumption that $ I$ is centered at the origin then allows us to dominate $ \lvert  x\rvert \lesssim \left\vert I\right\vert$.  The centering is a harmless assumption, and so we deduce the necessary condition 
\begin{equation} \label{e:necc1}
\sup _{I}   \frac 1 {\mu (I)}    \int _{I} \lvert  b(x) - \langle b \rangle _I \rvert ^2 \; \lambda(x) dx  \lesssim \lVert  [ b, H] : L ^{2} (\mu) \mapsto L ^{2} (\lambda)\rVert^2.  
\end{equation}
But $ \mu \in A_2$, which implies that: 
\begin{equation*}
1\leq \frac  {\mu  (I)\mu^{-1} (I)}    {\lvert  I\rvert  ^2 } \leq \left[ \mu \right]_{A_2}\quad\forall I\in\mathcal{D}.  
\end{equation*}
Hence, we see that 
\begin{equation}
\label{e:NecCon}
\sup _{I}   \frac  {\mu^{-1}  (I)}  {\lvert  I\rvert  ^2 } 
 \int _{I} \lvert  b(x) - \langle b \rangle _I \rvert ^2 \; \lambda(x) dx  \lesssim 
 \lVert  [ b, H] : L ^{2} (\mu) \mapsto L ^{2} (\lambda)\rVert^2.  
\end{equation}

\smallskip 
We show  that \eqref{e:NecCon} implies  that $\mathbf B _{2} [\mu ,\lambda]$ is finite.  As we have already shown that this is equivalent to the Bloom condition, it implies that the boundedness of the commutator implies that $b$ belongs to the Bloom BMO space.  Recall that, 
\begin{equation*}
\mathbf B_{2} [\mu ,\lambda]:= 
\sup _{K\in\mathcal{D}} \mu^{-1} (K) ^{-1/2} 
\left\lVert \sum_{I \;:\; I\subset K} 
 \widehat b (I)  \langle \mu^{-1}  \rangle_I h_I 
\right\rVert _{L ^{2}(\lambda)}. 
\end{equation*}

Fix a interval $ I_0$ on which we need to verify the $\mathbf B_2[\mu,\lambda]$ condition. 
Let $ \mathcal S$ be the maximal stopping intervals $ S\subset I_0$ so that $ \langle \mu^{-1}  \rangle_S \ge 4 \langle \mu^{-1}  \rangle _{I_0}$.  By the $ A _{\infty }$ property of $ \mu^{-1} $, it suffices to restrict the sum above to $ I\subset I_0$ 
with $ I $ not contained in any stopping interval.  But then we have
\begin{align*}
\sum_{I \in \mathcal I} \bigl\lvert \widehat b (I)\bigr\rvert ^2 \langle \mu^{-1}  \rangle_I^2
\left\langle \lambda\right\rangle_I & \lesssim 
\langle \mu^{-1}  \rangle_{I_0} ^2 
\sum_{I \;:\; I\subset I_0}\left\vert \widehat b (I)  \right\vert^2  \frac {\lambda (I)} {\lvert  I\rvert }
\\
& \lesssim 
\langle \mu^{-1}  \rangle_{I_0} ^2  \int _{I_0} \bigl\lvert b(x) - \langle b \rangle_ {I_0}\bigr\rvert ^2 \; \lambda(x) dx 
\\
& \lesssim \langle \mu^{-1}  \rangle_{I_0} ^2   \frac { \lvert  I_0\rvert ^2  } {\mu^{-1} (I_0)} \sup _{I}   \frac  {\mu^{-1}  (I)}  {\lvert  I\rvert  ^2 } 
 \int _{I} \lvert  b - \langle b \rangle _I \rvert ^2 \; \lambda(x) dx\\
 & = \mu^{-1} (I_0)\sup _{I}   \frac  {\mu^{-1}  (I)}  {\lvert  I\rvert  ^2 } 
 \int _{I} \lvert  b - \langle b \rangle _I \rvert ^2 \; \lambda(x) dx. 
\end{align*}
Therefore, $ \textbf B _{2} [ \mu ,\lambda] $ is bounded by $\lVert  [ b, H] : L ^{2} (\mu) \mapsto L ^{2} (\lambda)\rVert$ and so $b\in BMO_{\rho}$, and the proof is complete.  



\begin{bibdiv}
\begin{biblist}

\normalsize

\bib{Bloom}{article}{
   author={Bloom, S.},
   title={A commutator theorem and weighted BMO},
   journal={Trans. Amer. Math. Soc.},
   volume={292},
   date={1985},
   number={1},
   pages={103--122}
}

\bib{CRW}{article}{
   author={Coifman, R. R.},
   author={Rochberg, R.},
   author={Weiss, Guido},
   title={Factorization theorems for Hardy spaces in several variables},
   journal={Ann. of Math. (2)},
   volume={103},
   date={1976},
   number={3},
   pages={611--635}
}

\bib{PetermichlPott}{article}{
   author={Petermichl, S.},
   author={Pott, S.},
   title={An estimate for weighted Hilbert transform via square functions},
   journal={Trans. Amer. Math. Soc.},
   volume={354},
   date={2002},
   number={4},
   pages={1699--1703 (electronic)}
}

\bib{MR2354322}{article}{
   author={Petermichl, S.},
   title={The sharp bound for the Hilbert transform on weighted Lebesgue
   spaces in terms of the classical $A_p$ characteristic},
   journal={Amer. J. Math.},
   volume={129},
   date={2007},
   number={5},
   pages={1355--1375}
}

\bib{MR1756958}{article}{
   author={Petermichl, S.},
   title={Dyadic shifts and a logarithmic estimate for Hankel operators with
   matrix symbol},
   journal={C. R. Acad. Sci. Paris S\'er. I Math.},
   volume={330},
   date={2000},
   number={6},
   pages={455--460}
}

\bib{Witwer}{article}{
   author={Wittwer, Janine},
   title={A sharp estimate on the norm of the martingale transform},
   journal={Math. Res. Lett.},
   volume={7},
   date={2000},
   number={1},
   pages={1--12}
}

\end{biblist}
\end{bibdiv}


\end{document}